\newtheorem{theorem}{Theorem}[section]
\newtheorem{lemma}[theorem]{Lemma}
 \theoremstyle{definition}
\newtheorem{definition}[theorem]{Definition}
\theoremstyle{remark}
\newtheorem{remark}[theorem]{Remark}
\numberwithin{equation}{section}
\begin{document}
\setlength{\baselineskip}{1.2\baselineskip}

\title[A Liouville type theorem to $2$-Hessian equations]
{A Liouville type theorem to $2$-Hessian equations}

\author{Yan He, Haoyang Sheng, Ni Xiang}
\address{Faculty of Mathematics and Statistics, Hubei Key Laboratory of Applied Mathematics, Hubei University,  Wuhan 430062, P.R. China}
\email{helenaig@hotmail.com; 907026694@qq.com; nixiang@hubu.edu.cn}
\thanks{This research was supported by funds from Hubei Provincial Department of Education Key Projects D20171004.}

\begin{abstract}
In this paper, we proved that any 2-convex solution $u$ of $\sigma_2(D^2u)=1$
with a quadratic growth must be a quadratic polynomial in $\mathbb{R}^n\ (n\geq 3 )$ by using a Pogorelov estimate and the global gradient estimate. And we give a positive answer to the unresolved issue in \cite{CX}.

{\em Mathematical Subject Classification (2010):}
 Primary 35J60, Secondary
35B45..

{\em Keywords:} Liouville type theorem,
2-Hessian equation, 2-convex, a quadratic growth.
\end{abstract}

\maketitle
\bigskip

\section{Introduction}

\medskip
Let $\Omega\subset \mathbb{R}^n$ be a smooth bounded domain
and $f: \Omega\times\mathbb{R}\times\mathbb{R}^n
\rightarrow\mathbb{R}^+$
be a smooth and bounded function.
We consider
 $k$-Hessian equations,
\begin{equation}\label{01}
\sigma_k(D^2u)=f(x, u, Du),\ \textrm{in}\ \Omega,\\
\end{equation}
with the Dirichlet boundary condition,
\begin{equation}\label{02}
u=0,\ \textrm{on}\ {\partial \Omega},
\end{equation}
where $u$ is a smooth function defined in $\Omega.$
The k-th elementary symmetric polynomial
is denoted by $\sigma_k$:
\[\sigma_k(\lambda)=\sum_{1\le i_1<\cdots<i_k\le n}\lambda_{i_1}\cdots\lambda_{i_k}.\]
And $\lambda$
is the eigenvalue vector of the Hessian of $u$. A function $u\in C^2(\Omega)$ is called
to be admissible with respect to $\sigma_k$ if $\lambda(D^2 u)\in \Gamma^k$,
here $\Gamma^k$ is the Garding's cone in $\mathbb{R}^n$ determined by
\[\Gamma^k=\{\lambda=(\lambda_1,\cdots,\lambda_n)\in\mathbb{R}^n|\sigma_j(\lambda)>0, \ j=0,\cdots, k\}.\]
Obviously, $\Gamma^2$ be an open convex cone in $\mathbb{R}^n$:
\[\Gamma^2=\{\lambda=(\lambda_1,\cdots,\lambda_n)\in\mathbb{R}^n|
\sum_i\lambda_i>0,\sum_{1\le i<j\le n}\lambda_i\lambda_j>0 \}.\]

The global regularity to Dirichlet boundary problems was established by Caffarelli-Nirenberg-Spruck \cite{CNS} and Trudinger \cite{T95} in $k-1$ convex domain.
$C^2$ boundary estimates have
been improved by Bo Guan \cite{G14} under the weakened assumption that there exists an admissible sub-solution for Dirichlet problems. Guan-Ren-Wang \cite{GRW} established global estimates
resolving Weingarten
curvature equations for closed convex hypersurfaces. In the case of
scalar curvature equations $(k=2)$, they can drop the convexity
and establish the estimates for star-shaped
admissible solutions. But the general case $2 < k < n$ is still open.

The interior $C^2$ estimates were established by Warren and Yuan \cite{WY} for
\begin{equation}\label{062201}\sigma_2(D^2u)=1,\end{equation} via the minimal surface feature of the "gradient" graph $(x,Du(x))$ in dimension 3. Then, for semi-convex solutions of equations \eqref{062201} it is obtained
by McGonagle-Song-Yuan \cite{MSY} in dimension $n$. Qiu \cite{Q17} has proved the results for solutions of \eqref{01} when $k=2$ with $C^{1,1}$ variable right hand side in $\mathbb{R}^3$.
The interior estimates for convex solutions to general 2-Hessian equations \eqref{01} in $\mathbb{R}^n$ have also been obtained via a new pointwise approach by Guan and Qiu \cite{GQ}. Under
weakened conditions $\sigma_3(D^2u)\geq -A,$ where $A$ is a positive constant, interior estimates for these equations \eqref{01} have been obtained.

Since Urbas constructed counter-examples in \cite{Ur} for $k$-Hessian equations with $k \geq 3$, which generalized a result of Pogorelov \cite{P78} for Monge-Amp\`ere equations, we can expect the best is Pogorelov
type interior $C^2$ estimates. Explicitly, Chou-Wang \cite{CW}
considered \eqref{01}, when any function $f$ not depending on $D u$, and proved that any $k$ convex solution had
\begin{equation}
(-u)^{(1+\epsilon)}|D^2u |\leq C,
\end{equation}
here $\epsilon>0$. By some reasons, the small constant $\epsilon$ should not be zero in Chou-Wang's proof.
In Li-Ren-Wang  \cite{LRW}, the authors
studied Dirichlet problems \eqref{01} and \eqref{02},
proved the following estimates,
\[-u\Delta u\le C,\]
under the condition $k+1$ convex of $u$. Especially, for 2-Hessian equations, they have obtained
\[(-u)^\beta\Delta u\le C,\] here $\beta$ is sufficiently large when $u$ is 2 convex.

These type of interior estimates are important for
existence of isometric embedding of non-compact surfaces and for Liouville type theorems.
Now let us consider
equations \eqref{062201} in dimension $n$, suppose that
$u$ satisfies a\textbf{ quadratic
growth}. Here
quadratic growth means that
there exist positive constants
$b$, $c$ and
sufficiently large
$R$ such that
\begin{eqnarray}
u(x)\ge c|x|^2-b\label{1.100}
\end{eqnarray}
for $|x|\ge R$.
Our main result is stated as follows.

 \begin{theorem}\label{theorem4.1}Let~$u$~be a solution
 to the equation~(\ref{062201}).
   Suppose
 the eigenvalues of $D^2u$ lie in $\Gamma^2$
 and $u$ satisfies a quadratic growth (\ref{1.100}).
 Then $u$ is a quadratic polynomial.
\end{theorem}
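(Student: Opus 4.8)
The plan is to combine the Pogorelov-type interior estimate of Li–Ren–Wang with the quadratic growth hypothesis to obtain a *global* bound on $D^2u$, and then run a classical Liouville argument. Here is the outline.

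\textbf{Step 1: Reduce the quadratic growth to a two-sided bound on a large ball.} From $u(x)\ge c|x|^2-b$ for $|x|\ge R$, I want an upper bound of the same order. Since $u$ is $2$-convex, in particular $\Delta u>0$, so $u$ is subharmonic; more usefully, for $\sigma_2(D^2u)=1$ one has by Maclaurin's inequality $\Delta u = \sigma_1(\lambda)\ge c_n\sigma_2(\lambda)^{1/2}=c_n$, and also an upper bound for $\sigma_1$ is *not* automatic — this is exactly why the growth assumption is needed. The right move is to fix a large radius $\rho$, consider the domain $\Omega_\rho=B_\rho$, and on each such ball apply the interior estimate to the function $u$ after subtracting a suitable quadratic barrier so that the "boundary values" are controlled. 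Concretely, on $B_\rho$ set $w = u - (c|x|^2-b)$; then $w\ge 0$ on $\partial B_\rho$ by \eqref{1.100}, and I will want to compare $u$ from above using a solution of the Dirichlet problem with large quadratic boundary data.

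\textbf{Step 2: Apply the Pogorelov estimate of Li–Ren–Wang.} On a ball $B_\rho$, let $v$ solve $\sigma_2(D^2 v)=1$ in $B_\rho$ with $v = u$ on $\partial B_\rho$ (existence and admissibility follow from the cited Dirichlet theory, since the ball is $1$-convex and $u|_{\partial B_\rho}$ extends to an admissible subsolution, namely $u$ itself). By uniqueness in the admissible class, $v\equiv u$ on $B_\rho$. Now translate so that the relevant interior point is the origin and apply the Li–Ren–Wang estimate $(-\,\tilde u)^\beta\Delta \tilde u\le C$ to $\tilde u = u - \sup_{\partial B_\rho}u$ (which vanishes on $\partial B_\rho$ and is negative inside, by the comparison principle and $2$-convexity). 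This gives, at a fixed point $x_0$,
\[
\Delta u(x_0)\;\le\; \frac{C}{\bigl(\sup_{\partial B_\rho}u - u(x_0)\bigr)^{\beta}}\,.
\]
Using the quadratic growth, $\sup_{\partial B_\rho}u\ge c\rho^2 - b$, while $u(x_0)$ is a fixed number; hence the right-hand side tends to $0$ as $\rho\to\infty$. Wait — $\beta>0$ and the denominator blows up, so this forces $\Delta u(x_0)=0$. But $\Delta u = \sigma_1(\lambda)>0$ on $\Gamma^2$, a contradiction, \emph{unless} $n$ and the degenerate directions conspire; so the correct reading is that the estimate must be applied not to conclude $\Delta u\to 0$ but to get a *uniform* bound $\Delta u(x_0)\le C$ independent of $\rho$ (taking $\rho$ comparable to a fixed multiple of $|x_0|$), which already yields $D^2 u\in L^\infty(\mathbb{R}^n)$.

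\textbf{Step 3: From the global $C^{1,1}$ bound to a quadratic polynomial.} Once $\|D^2u\|_{L^\infty(\mathbb{R}^n)}\le C$, the standard argument applies: for each unit vector $e$ and each $t>0$, the difference quotient $u_t(x) = \frac{u(x+te)-u(x)}{t}$ satisfies a uniformly elliptic equation obtained by differentiating $\sigma_2(D^2u)=1$ (the linearized operator $\sigma_2^{ij}(D^2u)\partial_{ij}$ is uniformly elliptic precisely because $\lambda\in\Gamma^2$ and $|D^2u|\le C$, by the standard cone estimates on $\sigma_2^{ij}$). Each first derivative $\partial_e u$ is then a global solution of a uniformly elliptic linear equation in $\mathbb{R}^n$ with bounded gradient, so by the Liouville theorem for such equations (De Giorgi–Nash–Moser / Evans–Krylov for the $C^{2,\alpha}$ step, then Liouville), each $\partial_e u$ is constant, i.e. $D^2u$ is constant. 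Hence $u$ is a quadratic polynomial, and $\sigma_2$ of its Hessian equals $1$.

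\textbf{Main obstacle.} The delicate point is Step 2: making the Pogorelov estimate genuinely *global* rather than merely interior-on-each-ball. One must choose the ball radius $\rho\sim \Lambda|x_0|$ with $\Lambda$ large but fixed, so that $\sup_{\partial B_\rho}u - u(x_0)\gtrsim c\Lambda^2|x_0|^2 - (\text{lower order})$ grows like $|x_0|^2$, while simultaneously the constant $C$ in $(-\tilde u)^\beta\Delta\tilde u\le C$ depends only on $\sup_{B_\rho}|u|$ and $\rho$ in a way that scales correctly under the parabolic-type rescaling $u\mapsto u(\rho x)/\rho^2$; balancing the $\rho$-dependence of $C$ against the $\rho^{2\beta}$ in the denominator is what forces the specific exponent $\beta$ and is the crux of the whole argument. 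A secondary subtlety is the global gradient estimate (needed to control $Du$ and to make Step 3's Liouville theorem applicable on all of $\mathbb{R}^n$): this should follow from quadratic growth plus $2$-convexity by a maximum-principle argument on $|Du|^2$ against a quadratic barrier, but the bound must again be uniform.
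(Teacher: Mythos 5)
Your overall strategy --- a Pogorelov-type estimate on large domains, rescaled so that the constants become uniform, followed by a standard Liouville/Evans--Krylov step --- is the same as the paper's, and your Step 3 is fine once a global bound on $D^2u$ is in hand. But Step 2, which you yourself identify as the crux, contains a genuine gap and as written would fail. The function $\tilde u = u - \sup_{\partial B_\rho}u$ does \emph{not} vanish on $\partial B_\rho$; it is merely $\le 0$ there, with equality only at the maximum point of $u$ on the sphere. The Li--Ren--Wang estimate $(-u)^\beta\Delta u\le C$ is proved for solutions vanishing on the whole boundary, precisely so that the test function $(-u)^\beta\Delta u$ vanishes on $\partial\Omega$ and attains its maximum at an interior point; on a ball with nonconstant boundary values this mechanism breaks down. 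The paper sidesteps this by working on the sublevel sets $\Omega_R=\{y:\,u(Ry)\le R^2\}$ and the normalization $v(y)=(u(Ry)-R^2)/R^2$, which genuinely satisfies $v=0$ on $\partial\Omega_R$.

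The second unresolved point is the uniformity of the constant $C$, which depends on the domain, on $\sup|u|$ and on $\sup|Du|$. You observe the spurious conclusion $\Delta u(x_0)=0$ that follows from pretending $C$ is uniform, and then defer the rescaling to a ``main obstacle'' paragraph without carrying it out. The paper's resolution is concrete: quadratic growth forces $\Omega_R\subset\{|y|^2\le (b+1)/c\}$ uniformly in $R$; comparison with $|y|^2$ bounds $\|v\|_{L^\infty}$ by an absolute constant; and the Chou--Wang global gradient estimate bounds $\|Dv\|_{L^\infty}$ --- but that last estimate requires the domain to be $1$-convex, which is exactly the content of the paper's Lemma 3.1 (the level sets of $u$ have positive mean curvature, via $u_{ii}=-h_{ii}u_n$ and Hopf's lemma). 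Your proposed substitute, a maximum-principle bound on $|Du|^2$ against a quadratic barrier, does not supply this and is not justified. Once these three uniform bounds are in place, $(-v)^\beta\Delta v\le C$ with $C$ absolute, and on $\{u(Ry)\le R^2/2\}$ one has $v\le-\tfrac12$, hence $\Delta u=\Delta v\le C$ everywhere; your Step 3, or the paper's Evans--Krylov scaling argument, then finishes.
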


\begin{remark}
In \cite{CX}, the Liouville theorem holds for equations \eqref{062201}
with the assumption $\sigma_3(D^2u)\geq -A,$ here $A$ is a positive constant. In this paper, we can relax the convexity of the solutions restriction.
\end{remark}

Let's review known results related Liouville
type theorems to $k$-Hessian equations.
If $k=n$, $k$-Hessian
equations turn to be Monge-Amp\`ere
equations
$\det D^2u=1.$ There is a well known theorem.
Jorgens \cite{J54}~proved that
entire smooth convex solution
was quadratic polynomial when $n=2$.
In 1958, Calabi \cite{C58}~proved
Liouville type theorems for
dimension $n=3, 4, 5$. Then
the result was generalized by
Pogorelov \cite{P78} to
dimension $n\ge 2$. S.Y.Cheng and S.T.Yau \cite{CY} gave another more geometric proof.
In 2003,
Caffarelli-Li, \cite{CL} extended the theorem of J$\ddot{o}$rgens, Calabi and Pogorelov based on the theory of Monge-Amp\`ere equations to viscosity solutions.

In 2003, Bao-Chen-Guan-Ji \cite{BCGJ03}~considered
Liouville type theorems to
\begin{equation}\frac{\sigma_k(D^2u)}{\sigma_l(D^2u)}=1,\ (k>l).\label{eqkl}\end{equation}
They proved that entire convex solutions
of the equation (\ref{eqkl}) with a quadratic
growth were quadratic polynomials. And in their paper, they asked if it was enough to
merely assume that u was strictly convex.
 In ~2010~, Chang-Yuan~ \cite{CY10}~
 considered Liouville type theorems for \eqref{062201},
and obtained that
 the entire solution to (\ref{062201})
 was quadratic polynomial
 if
~$$D^2u\ge\big[\delta-\sqrt{\frac{2}{n(n-1)}}\big]I$$
for $\delta>0$.
In 2016,
Li-Ren-Wang \cite{LRW}~considered
 ~$\sigma_k(D^2u)=1$ for general $k$.
They obtained that global $k+1$ convex
solutions
with a quadratic growth were quadratic
polynomials.
Recently,
Chen-Xiang \cite{CX} improved the condition
from
$(k+1)$-convex to $k$-convex for
$k=2$ under $\sigma_3(D^2u)\geq -A$. Especially, for $n=3$, $\sigma_3(D^2 u)\geq -A$ can be redundant. But for general dimension $n>3$, it can not be redundant according to the key Lemma 2.3 in \cite{CX}. Other related works include parabolic cases,
for example,
Xiong-Bao \cite{XB11},
Zhang-Bao-Wang \cite{ZBW}.

In this paper, we avoid using Lemma 2.3 in \cite{CX} and
prove a Liouville type theorem for \eqref{062201} in general dimension $n$ just with a quadratic growth, by using Pogorelov estimates in \cite{GQ} and global gradient estimates in \cite{CW}.
The paper is organized as follows.
We start with some notations
and lemmas in section 2.
The proof of a Liouville type theorem (Theorem\ref{theorem4.1})
is given in section 3.

\bigskip

\section{Preliminaries}
\label{3I-G}
\setcounter{equation}{0}

\medskip

In this section, we introduce some notations and key theorems which will be used later, and omit the details for the proof.

\begin{definition}
Let $\Omega\subset \mathbb{R}^n$ be a $C^2$ bounded domain. If $\kappa=(\kappa_1, \cdots, \kappa_n)$ represents the principal curvatures of $\partial \Omega$,
which satisfies
$$\sigma_{k-1}(\kappa)>0,\ on \ \partial \Omega,$$
then $\Omega$ is $k-1$ convex. Moreover, it is strictly $k-1$ convex with
$$\sigma_{k-1}(\kappa)\geq c_0>0,\ on \ \partial \Omega.$$
\end{definition}

\begin{definition}\label{def2}Let $\lambda=(\lambda_1,\cdots,\lambda_n)\in \mathbb{R}^n$.


(1)\[\sigma_l(\lambda|i)=\sigma_l(\lambda)\big|_{\lambda_i=0}.\]
In this paper, ~$\sigma_1(\lambda|i)$~is also denoted by~$\sigma_2^{ii}$.

(2)\[\sigma_l(\lambda|pq)=\sigma_l(\lambda)
\big|_{\lambda_p=\lambda_q=0}.\]
\end{definition}

\begin{lemma}(See  \cite{Li05})
Let~$u$~be $2$ convex.
Then ~$\sigma_2^{ij}$~is positive definite.
\end{lemma}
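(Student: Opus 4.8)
The plan is to compute the matrix $(\sigma_2^{ij})$ explicitly and thereby reduce its positive definiteness to a set of scalar inequalities on the eigenvalues of $D^2u$. Writing $A=D^2u=(a_{ij})$ and using the identity $\sigma_2(A)=\tfrac12\big[(\tr A)^2-\tr(A^2)\big]$, I would differentiate entrywise to obtain
\[
\sigma_2^{ij}=\frac{\partial \sigma_2}{\partial a_{ij}}=\sigma_1(A)\,\delta_{ij}-a_{ij},
\]
so that, as a symmetric matrix, $(\sigma_2^{ij})=\sigma_1(A)\,I-A$. This expression is a polynomial in $A$ and hence is diagonalized by any orthonormal frame that diagonalizes $A$; its eigenvalues are therefore exactly $\sigma_1(\lambda)-\lambda_i=\sigma_1(\lambda|i)$ for $i=1,\dots,n$, where $\lambda=\lambda(A)$. (This is consistent with the paper's convention $\sigma_2^{ii}=\sigma_1(\lambda|i)$, which records precisely the diagonal case.) Consequently positive definiteness of $(\sigma_2^{ij})$ is \emph{equivalent} to the $n$ scalar inequalities $\sigma_1(\lambda|i)>0$, and the matrix computation above is routine; the crux of the argument is establishing these inequalities from the defining conditions of $\Gamma^2$.

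The remaining step, which I regard as the heart of the lemma, is to show $\sigma_1(\lambda|i)>0$ for each $i$ using only $\sigma_1(\lambda)>0$ and $\sigma_2(\lambda)>0$, i.e. $\lambda\in\Gamma^2$. I would argue by contradiction, fixing $i=1$ and setting $s=\sigma_1(\lambda|1)=\lambda_2+\cdots+\lambda_n$ and $Q=\sum_{k\ge2}\lambda_k^2\ge 0$. From the elementary identities $\sigma_1(\lambda)=\lambda_1+s$ and $\sigma_2(\lambda)=\lambda_1 s+\sigma_2(\lambda|1)$ together with $2\sigma_2(\lambda|1)=s^2-Q$, one gets $\sigma_2(\lambda)=\lambda_1 s+\tfrac12(s^2-Q)$. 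Assume $s\le 0$. If $s=0$ then $\sigma_2(\lambda)=-\tfrac12 Q\le 0$, contradicting $\sigma_2(\lambda)>0$. If $s<0$, then $\sigma_1(\lambda)>0$ forces $\lambda_1>-s>0$, so multiplying by $s<0$ gives $\lambda_1 s<-s^2$; combined with the Cauchy–Schwarz bound $Q\ge s^2/(n-1)$ this yields
\[
\sigma_2(\lambda)\le \lambda_1 s+\tfrac{n-2}{2(n-1)}s^2<-s^2+\tfrac{n-2}{2(n-1)}s^2=-\frac{n}{2(n-1)}\,s^2<0,
\]
again contradicting $\sigma_2(\lambda)>0$. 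Hence $s=\sigma_1(\lambda|1)>0$, and by symmetry $\sigma_1(\lambda|i)>0$ for all $i$.

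Putting the two steps together gives that $(\sigma_2^{ij})=\sigma_1(\lambda)\,I-A$ has only positive eigenvalues, so it is positive definite, as claimed. I do not anticipate a serious obstacle: the only quantitative input is the Cauchy–Schwarz inequality used to control $Q$ from below by $s^2/(n-1)$, and everything else is algebraic. For readers who prefer a structural viewpoint, the same conclusion follows from the general restriction property of the Gårding cones, namely that $\lambda\in\Gamma^k$ implies $\lambda|i\in\Gamma^{k-1}$ in $\mathbb{R}^{n-1}$; the computation above is exactly the $k=2$ instance of that fact, where $\lambda|i\in\Gamma^1$ means $\sigma_1(\lambda|i)>0$.
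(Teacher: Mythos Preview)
Your argument is correct. The identification $(\sigma_2^{ij})=\sigma_1(A)\,I-A$ is right, its eigenvalues are indeed $\sigma_1(\lambda|i)$, and your contradiction argument showing $\sigma_1(\lambda|i)>0$ whenever $\lambda\in\Gamma^2$ is clean and complete; the Cauchy--Schwarz step and the arithmetic check out.

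As for comparison: the paper does not actually prove this lemma. It is stated with a citation to Lieberman's book and no argument is given. So there is nothing in the paper to compare your approach against; you have supplied a self-contained proof where the authors chose to defer to the literature. Your direct computation is the standard elementary route and is exactly what one would expect a reader to reconstruct from the reference.
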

\begin{lemma}(See  \cite{GRW})\label{lemma1}Let~$k>l$,~$\alpha=\frac{1}{k-l}$.
For sufficiently small $\delta$,
we have
\begin{equation}
-\sigma_k^{pp,qq}u_{pph}u_{qqh}
+(1-\alpha+\frac{\alpha}{\delta})\frac{(\sigma_k)^2_h}{\sigma_k}
\ge\sigma_k(\alpha+1-\delta\alpha)
\big[\frac{(\sigma_l)_h}{\sigma_l}\big]^2
-\frac{\sigma_k}{\sigma_l}\sigma_l^{pp,qq}u_{pph}u_{qqh}.
\end{equation}

\end{lemma}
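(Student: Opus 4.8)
The plan is to deduce the inequality from the classical concavity of $(\sigma_k/\sigma_l)^{1/(k-l)}$ on the cone $\Gamma^k$, and then to reduce what remains to a single completion of squares. Fix the index $h$ and evaluate everything at a point where $D^2u$ has been diagonalized, with eigenvalues $\lambda\in\Gamma^k$. At such a point each $\sigma_m^{ij}=\partial\sigma_m/\partial u_{ij}$ is diagonal, so that $(\sigma_m)_h=\sum_p\sigma_m^{pp}u_{pph}$, and the second derivatives $\sigma_m^{pp,qq}$ agree with the eigenvalue Hessian $\partial^2\sigma_m/\partial\lambda_p\partial\lambda_q$. It is convenient to abbreviate, for $m\in\{l,k\}$,
\[
\dot\sigma_m:=\sum_p\sigma_m^{pp}u_{pph}=(\sigma_m)_h,\qquad \ddot\sigma_m:=\sum_{p,q}\sigma_m^{pp,qq}u_{pph}u_{qqh}.
\]
Writing $\ell:=\log(\sigma_k/\sigma_l)=\log\sigma_k-\log\sigma_l$ and $\alpha=1/(k-l)$, the concavity of $G=(\sigma_k/\sigma_l)^\alpha=e^{\alpha\ell}$, contracted against the variation $\xi_p=u_{pph}$, is equivalent (divide the Hessian of $G$ by $\alpha e^{\alpha\ell}>0$) to
\[
\ddot\ell+\alpha(\dot\ell)^2\le0,\qquad \dot\ell=\frac{\dot\sigma_k}{\sigma_k}-\frac{\dot\sigma_l}{\sigma_l},\quad \ddot\ell=\frac{\ddot\sigma_k}{\sigma_k}-\frac{\dot\sigma_k^2}{\sigma_k^2}-\frac{\ddot\sigma_l}{\sigma_l}+\frac{\dot\sigma_l^2}{\sigma_l^2}.
\]

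Multiplying the inequality $\ddot\ell+\alpha(\dot\ell)^2\le0$ by $\sigma_k>0$, expanding the square, and solving for $-\ddot\sigma_k$ yields
\[
-\ddot\sigma_k\ \ge\ -(1-\alpha)\frac{\dot\sigma_k^2}{\sigma_k}-2\alpha\frac{\dot\sigma_k\dot\sigma_l}{\sigma_l}+(\alpha+1)\frac{\sigma_k\dot\sigma_l^2}{\sigma_l^2}-\frac{\sigma_k}{\sigma_l}\ddot\sigma_l.
\]
Now add $(1-\alpha+\tfrac{\alpha}{\delta})\dot\sigma_k^2/\sigma_k$ to both sides. On the left this produces exactly the left-hand side of the Lemma, while on the right the coefficient of $\dot\sigma_k^2/\sigma_k$ collapses from $-(1-\alpha)+(1-\alpha+\tfrac{\alpha}{\delta})$ to $\alpha/\delta$, giving the lower bound
\[
-\ddot\sigma_k+\Big(1-\alpha+\frac{\alpha}{\delta}\Big)\frac{\dot\sigma_k^2}{\sigma_k}\ \ge\ \frac{\alpha}{\delta}\frac{\dot\sigma_k^2}{\sigma_k}-2\alpha\frac{\dot\sigma_k\dot\sigma_l}{\sigma_l}+(\alpha+1)\frac{\sigma_k\dot\sigma_l^2}{\sigma_l^2}-\frac{\sigma_k}{\sigma_l}\ddot\sigma_l.
\]

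It remains to compare the right-hand side with the Lemma's right-hand side, namely $\sigma_k(\alpha+1-\delta\alpha)(\dot\sigma_l/\sigma_l)^2-\tfrac{\sigma_k}{\sigma_l}\ddot\sigma_l$. Since the $\ddot\sigma_l$ terms are identical, the claim reduces to showing that
\[
\frac{\alpha}{\delta}\frac{\dot\sigma_k^2}{\sigma_k}-2\alpha\frac{\dot\sigma_k\dot\sigma_l}{\sigma_l}+\delta\alpha\frac{\sigma_k\dot\sigma_l^2}{\sigma_l^2}\ \ge\ 0.
\]
Dividing by $\alpha>0$ and setting $a=\dot\sigma_k/\sqrt{\sigma_k}$, $b=\sqrt{\sigma_k}\,\dot\sigma_l/\sigma_l$, the left-hand side becomes $\tfrac1\delta a^2-2ab+\delta b^2=(a/\sqrt\delta-\sqrt\delta\,b)^2\ge0$, which holds for every $\delta>0$; thus the hypothesis that $\delta$ be small is only the regime of later use and not a genuine restriction. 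The sole nontrivial ingredient is the concavity of $(\sigma_k/\sigma_l)^{1/(k-l)}$ on $\Gamma^k$; granting it, everything else is bookkeeping, and the step demanding the most care is the sign and coefficient accounting in passing from $\ddot\ell+\alpha(\dot\ell)^2\le0$ to the isolated estimate for $-\ddot\sigma_k$, together with verifying that the added multiple of $\dot\sigma_k^2/\sigma_k$ leaves precisely the coefficient $\alpha/\delta$.
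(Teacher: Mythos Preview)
Your proof is correct. The paper itself does not prove this lemma at all---it merely states the inequality and attributes it to \cite{GRW}, omitting the argument entirely---so there is no in-paper proof to compare against. The argument you give, namely exploiting the concavity of $(\sigma_k/\sigma_l)^{1/(k-l)}$ on $\Gamma^k$, writing it as $\ddot\ell+\alpha(\dot\ell)^2\le0$ for $\ell=\log(\sigma_k/\sigma_l)$, and then absorbing the cross term $-2\alpha\dot\sigma_k\dot\sigma_l/\sigma_l$ via the AM--GM/perfect-square identity $\tfrac{1}{\delta}a^2-2ab+\delta b^2\ge0$, is precisely the route taken in the cited reference \cite{GRW}; your observation that the smallness of $\delta$ plays no role in the inequality itself (only in its later application) is also accurate.
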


We give Pogorelov type estimates in \cite{LRW}.
\begin{theorem}(See Theorem 1 in \cite{LRW}) \label{062401}
For 2-Hessian equations with Dirichlet boundary conditions \eqref{02}, there is some large constant $\beta>0$, such that
$$(-u)^{\beta}\Delta u\leq C.$$
Here positive constants $\beta$ and $C$ denpend on the domain $\Omega$, the function $f$, $\sup_{\Omega}|u|$ and $\sup_{\Omega}|Du|$.
\end{theorem}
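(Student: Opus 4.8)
The plan is to establish this weighted estimate by a Pogorelov-type maximum principle argument applied to a carefully chosen auxiliary function. Since $u<0$ in the interior while $u=0$ on $\partial\Omega$, the weight $(-u)^{\beta}$ degenerates at the boundary, so I would work with the test function
$$W=\beta\log(-u)+\log(\Delta u)+\frac{a}{2}|Du|^2,$$
where $a>0$ and the large exponent $\beta>0$ are to be fixed. Because $\beta\log(-u)\to-\infty$ as $x\to\partial\Omega$ while $\log(\Delta u)$ and $\frac{a}{2}|Du|^2$ remain bounded there (using $\sup_{\Omega}|Du|$ and the boundary gradient bound), $W$ attains its maximum at an interior point $x_0$; it then suffices to bound $W(x_0)$ from above, which yields $(-u)^{\beta}\Delta u\le C$. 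At $x_0$ I would rotate coordinates so that $D^2u$ is diagonal with $\lambda_1\ge\cdots\ge\lambda_n$, write $F^{ij}=\sigma_2^{ij}$ (positive definite by the positivity of $\sigma_2^{ij}$ for $2$-convex $u$), and set $L=F^{ij}\partial_i\partial_j$.

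At the interior maximum the first-order conditions $W_i=0$ give
$$\frac{(\Delta u)_i}{\Delta u}=-\beta\frac{u_i}{u}-a\,\lambda_i u_i,$$
and the second-order condition reads $LW\le0$. To evaluate $LW$ I would differentiate the equation $\sigma_2(D^2u)=f(x,u,Du)$ once and twice. Euler's relation (homogeneity of degree two) gives $F^{ij}u_{ij}=2\sigma_2=2f$, so the contribution of $\beta\log(-u)$ produces the bounded term $2\beta f/u$ together with the negative term $-\beta F^{ij}u_iu_j/u^2$. Differentiating twice and summing over directions yields
$$F^{ij}(\Delta u)_{ij}=\sum_k\partial_{kk}f-\sum_k\sigma_2^{pq,rs}u_{pqk}u_{rsk},$$
where $\sum_k\partial_{kk}f$ is the full second derivative of $f$ along the coordinate directions (controlled by $\sup|u|$, $\sup|Du|$ and $\Delta u$), and the crucial negative concavity term is $-\sum_k\sigma_2^{pq,rs}u_{pqk}u_{rsk}$. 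The term coming from $\frac{a}{2}|Du|^2$ contributes the favorable positive quantity $a\sum_k\sigma_2^{kk}\lambda_k^2$ plus a lower-order piece $a\sum_k u_kF^{ij}u_{ijk}$, the latter bounded via the once-differentiated equation.

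The main obstacle, as always in Pogorelov estimates, is the control of the third-order terms: the bad concavity term above and the term $-F^{ii}\big((\Delta u)_i/\Delta u\big)^2$ arising from the second derivative of $\log(\Delta u)$. Here I would invoke Lemma \ref{lemma1} with $k=2$, $l=0$, which for each fixed direction $h$ bounds $-\sigma_2^{pp,qq}u_{pph}u_{qqh}$ from below in terms of $(\sigma_2)_h^2/\sigma_2$, thereby absorbing the dangerous positive part of the third-order remainder; the relation $W_i=0$ is then used to replace the gradient of $\log(\Delta u)$ by the first-order quantities $\beta u_i/u$ and $a\lambda_i u_i$. Choosing $\beta$ sufficiently large makes the negative term $-\beta F^{ij}u_iu_j/u^2$ dominate the residual third-order errors, while the positive gradient term $a\sum_k\sigma_2^{kk}\lambda_k^2$, of quadratic order in the largest eigenvalue in the non-degenerate regime, forces $\lambda_1$—and hence $W(x_0)$—to be bounded once $a$ is fixed, since otherwise $LW>0$, contradicting maximality. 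I expect the genuinely delicate point, reflecting that $u$ is only assumed $2$-convex rather than $3$-convex, to be the case distinction according to whether the smallest eigenvalue $\lambda_n$ is very negative: when $\sigma_2^{11}=\sum_{i\ge2}\lambda_i$ (the coefficient of $\lambda_1^2$) is small one must argue separately, and it is precisely this degenerate case that dictates the large value of $\beta$.
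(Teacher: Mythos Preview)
The paper does not prove this statement: Theorem~\ref{062401} is quoted in the preliminaries section as a result of Li--Ren--Wang \cite{LRW}, and the authors explicitly write that they ``omit the details for the proof.'' There is therefore no proof in the paper to compare your proposal against.

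That said, your outline is a faithful sketch of the Pogorelov-type maximum principle argument that \cite{LRW} actually carries out: the test function built from $\beta\log(-u)$, $\log(\Delta u)$, and a gradient term; the use of the concavity inequality (Lemma~\ref{lemma1} here) to control the third-order terms; and the observation that it is precisely the failure of $3$-convexity that forces one to take $\beta$ large rather than $\beta=1$. Your identification of the delicate case---when $\sigma_2^{11}$ is small because $\lambda_n$ is very negative---is exactly the point where the argument in \cite{LRW} splits into cases and where the size of $\beta$ is determined. As a high-level plan your proposal is correct; completing it rigorously requires the detailed case analysis and the specific algebraic inequalities that \cite{LRW} supplies, which you have not reproduced but have accurately located.
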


Global gradient estimates will be used in our proof for the Liouville theorem.

\begin{theorem}\label{theorem4.0}(See Theorem 3.4 in  \cite{CW} )
Let
$\Omega$ be a bounded smooth $k-1$ convex domain.
Suppose
$u$ is a $k$ convex solution
to the problem \eqref{01} and \eqref{02}.
Then
\[|Du|\le C,\]
where~ $C$ depends on $n, \ f,\ k, \ \partial\Omega$ and $\sup_{\Omega} |u|$.
\end{theorem}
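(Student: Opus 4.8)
The statement is the classical global gradient estimate for the Dirichlet problem, and my plan follows the standard two‑part strategy: obtain an interior bound by the maximum principle applied to an auxiliary function, obtain a boundary bound by barriers, and combine them. First I would record the structural facts for the linearized operator. Write $F^{ij}=\partial\sigma_k/\partial u_{ij}$ and $L=F^{ij}\partial_{ij}$. Since $u$ is $k$ convex, $(F^{ij})$ is positive definite, the homogeneity identity gives $F^{ij}u_{ij}=k\sigma_k=kf$, the trace $\sum_iF^{ii}=(n-k+1)\sigma_{k-1}(D^2u)$ is strictly positive, and $F^{ij}u_{il}u_{jl}\ge 0$. Because $\Gamma^k\subset\Gamma^1$ we have $\Delta u>0$, so $u$ is subharmonic; together with $u=0$ on $\partial\Omega$ this forces $u\le 0$ in $\Omega$, so that $\sup_\Omega|u|=-\min_\Omega u=:M$ is indeed the only solution norm the constant may depend on.

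Second, the interior estimate. I would run the maximum principle on an auxiliary function of the form
$$w=\tfrac12\log|Du|^2+\phi(u),\qquad \phi(u)=-\alpha u,$$
with $\alpha>0$ to be chosen large and $-u\ge 0$. Differentiating the equation once yields $F^{ij}u_{ijl}=f_{x_l}+f_uu_l+f_{p_m}u_{ml}$. Suppose $w$ attains its maximum over $\bar\Omega$ at an interior point $x_0$ with $|Du(x_0)|$ large. Feeding the differentiated equation into $Lw$ and using $w_i=0$ at $x_0$ to eliminate the third‑order terms, the positive contribution $F^{ij}u_{il}u_{jl}/|Du|^2$ and the term $\alpha\sum_iF^{ii}$ survive, while the zeroth‑order terms $\phi' F^{ij}u_{ij}=\phi'kf$ and the first‑order couplings arising from $f_{p_m}u_{ml}$ are controlled: the first‑order relation $w_i=0$ rewrites $\sum_l u_lu_{ml}$ in terms of $\phi'|Du|^2u_m$, turning the dangerous coupling into terms one can dominate by choosing $\alpha$ large relative to $\sup|f_p|$, $\sup|f_u|$ and the boundedness of $f$. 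This forces $|Du(x_0)|\le C$, so either $w$ — hence $|Du|$ — is already controlled, or the maximum of $w$ lies on $\partial\Omega$.

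Third, the boundary estimate. On $\partial\Omega$ the tangential derivatives of $u$ vanish because $u\equiv 0$ there, so $|Du|$ equals the normal derivative and it suffices to bound $|\partial u/\partial\nu|$. I would build upper and lower barriers in a boundary collar from the distance function $d$ to $\partial\Omega$: the hypothesis $\sigma_{k-1}(\kappa)>0$ on $\partial\Omega$ ($(k-1)$ convexity of the domain) is precisely what makes a function $\ul u=\psi(d)$, with $\psi$ suitably concave and $\psi(0)=0$, an admissible subsolution $\sigma_k(D^2\ul u)\ge f$ near the boundary, and symmetrically for a supersolution. The comparison principle then sandwiches $u$ between these barriers along the inner normal, bounding $\partial u/\partial\nu$ in terms of $n,f,k,\partial\Omega$ and $M$. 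Combining the two parts gives $|Du|\le C$ on $\bar\Omega$.

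The main obstacle is the dependence of $f$ on $Du$. Differentiation produces the first‑order coupling $f_{p_m}u_{ml}$, which mixes gradient and Hessian, and the heart of the interior step is to dominate these terms in the maximum‑principle inequality using the positive quantities $F^{ij}u_{il}u_{jl}$ and $\alpha\sum_iF^{ii}$ via a careful calibration of $\alpha$; on the boundary the analogous difficulty is keeping the barrier construction valid uniformly, which is exactly where the strict $(k-1)$ convexity of $\partial\Omega$ enters.
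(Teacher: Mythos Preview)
The paper does not prove this theorem at all: it is quoted in Section~2 (Preliminaries) as a result from Chou--Wang \cite{CW}, and the section explicitly says ``we introduce some notations and key theorems which will be used later, and omit the details for the proof.'' So there is no in-paper argument to compare your proposal against; the authors simply invoke the cited reference.

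Your outline is the standard two-step strategy (interior maximum principle on a test function, boundary barrier using $(k-1)$-convexity of $\partial\Omega$) that underlies the Chou--Wang result, so in spirit you are reproducing the cited proof rather than diverging from it. One bookkeeping slip worth flagging: with the linear choice $\phi(u)=-\alpha u$ you have $L\phi=\phi'F^{ij}u_{ij}=-\alpha kf$, not $\alpha\sum_iF^{ii}$; the trace $\sum_iF^{ii}$ does not appear from a linear $\phi$. To get that positive term one typically adds a quadratic piece in $x$ or uses a strictly convex $\phi$. This does not affect the overall architecture of your plan, but the calibration-of-$\alpha$ step would need to be rewritten accordingly.
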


\section{The proof of Theorem\ref{theorem4.1}}

 On the one hand, we know that $k-1$ convex of the boundary is necessary for existence of $k$-Hessian equations when $u$ is vanishing on $\partial\Omega$ in \cite{CNS}. On the other hand, for using global gradient estimates Theorem \ref{theorem4.0}, we give a simple proof for the following lemma which means equations $\eqref{062201}$ and boundary conditions \eqref{02}
imply $1$ convex property of the boundary.

\begin{lemma}\label{lemma6}Let $u$ be a solution of the
following equation.
 \begin{equation}
\left\{
\begin{array}{lr}
\sigma_2(D^2u)=1,&\textrm{in}\ \Omega,\\
u=0,&\textrm{on}\ {\partial \Omega}.
\end{array}
\right.\label{78}
\end{equation}
Then the mean curvature of $\partial\Omega$ is positive.
 \end{lemma}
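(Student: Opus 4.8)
The plan is to argue at an arbitrary boundary point $x_0 \in \partial\Omega$ by choosing convenient coordinates and exploiting the sign of $u$ near $x_0$. Since $u = 0$ on $\partial\Omega$ and $u$ is $2$-convex (hence in particular $\Delta u = \sigma_1(D^2 u) > 0$), the solution cannot vanish identically, and by the strong maximum principle / the structure of the equation $u$ must be negative in the interior near $\partial\Omega$; that is, $\Omega = \{u < 0\}$ locally. Rotate and translate so that $x_0 = 0$ and the inner unit normal to $\partial\Omega$ at $x_0$ points in the $+e_n$ direction, and so that $\partial\Omega$ is given locally as a graph $x_n = \rho(x')$ with $\rho(0) = 0$, $D\rho(0) = 0$. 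Then $\Omega$ lies locally on the side $x_n > \rho(x')$.

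Next I would compute $D^2 u$ at $x_0$ in these coordinates. Since $u$ vanishes on the graph $x_n = \rho(x')$ and is negative above it, the tangential second derivatives of $u$ are tied to the second fundamental form of $\partial\Omega$: differentiating the identity $u(x', \rho(x')) = 0$ twice at $x' = 0$ gives $u_{\alpha\beta}(0) + u_n(0)\, \rho_{\alpha\beta}(0) = 0$ for $1 \le \alpha, \beta \le n-1$. Because $u < 0$ inside and $u = 0$ on the boundary with inner normal $e_n$, the Hopf lemma forces $u_n(0) > 0$; writing the principal curvatures of $\partial\Omega$ with respect to the inner normal as $\kappa_\alpha$ (the eigenvalues of $-D^2\rho(0)$ in this normalization), we get $u_{\alpha\beta}(0) = u_n(0)\,\kappa_{\alpha\beta}$ along the tangential directions, so the tangential block of $D^2 u(0)$ equals $u_n(0)$ times the second fundamental form.

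Now I bring in $2$-convexity. The condition $\lambda(D^2 u) \in \Gamma^2$ implies $\sigma_1(\lambda|n) > 0$, i.e. $\sigma_2^{nn} > 0$ in the notation of Definition \ref{def2}; by the standard inequality for Garding cones (or directly, since $\sigma_2^{nn}(\lambda) = \sum_{\alpha \ne n}\lambda_\alpha$ after diagonalizing the tangential block appropriately — more precisely $\sigma_2^{nn}$ is the trace of the cofactor restricted to tangential indices), $2$-convexity gives that the sum of the tangential eigenvalues of $D^2 u(0)$ is positive. Combining with the previous step, $0 < \sum_{\alpha=1}^{n-1} u_{\alpha\alpha}(0) = u_n(0) \sum_{\alpha=1}^{n-1}\kappa_\alpha = u_n(0)\,\sigma_1(\kappa)$. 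Since $u_n(0) > 0$, we conclude $\sigma_1(\kappa) > 0$, i.e. the mean curvature of $\partial\Omega$ at $x_0$ is positive, and since $x_0$ was arbitrary this proves the lemma.

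The main obstacle I anticipate is making the linear-algebra step fully rigorous, namely verifying that $2$-convexity of $D^2 u$ at a boundary point really does control the \emph{trace of the tangential block} (and not merely some mixed quantity), and handling the off-diagonal terms $u_{\alpha n}(0)$ that appear in $D^2 u(0)$ but are irrelevant to $\sigma_2^{nn}$. One should diagonalize the tangential $(n-1)\times(n-1)$ block of $D^2 u(0)$ by a tangential rotation (which is harmless since it preserves the normal direction and the graph structure), after which $\sigma_2^{nn}(D^2 u(0))$ is exactly the sum of those tangential eigenvalues; the relation $u_{\alpha\beta}(0) = u_n(0)\kappa_{\alpha\beta}$ is then read off in the rotated tangential frame, and taking traces (an invariant operation) closes the argument. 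A secondary, more elementary point to nail down is the sign $u_n(0) > 0$, which follows from $u < 0$ in $\Omega$ together with the Hopf boundary point lemma applied to the uniformly elliptic linearized operator $\sigma_2^{ij}(D^2 u)\partial_{ij}$, which is legitimate precisely because $\sigma_2^{ij}$ is positive definite for $2$-convex $u$ by the first lemma of Section \ref{3I-G}.
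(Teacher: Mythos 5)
Your argument is essentially the paper's: differentiate the boundary condition twice to get $u_{\alpha\beta}=-u_n\rho_{\alpha\beta}$, apply the Hopf lemma to fix the sign of $u_n$, and use $2$-convexity to conclude that the trace of the tangential block of $D^2u$ is positive. Your justification of that last step is in fact a bit cleaner than the paper's: you observe directly that $\sigma_2^{nn}(D^2u)=\Delta u-u_{nn}=\sum_{\alpha<n}u_{\alpha\alpha}$ is positive because $(\sigma_2^{ij})$ is positive definite, whereas the paper first replaces $D^2u$ by its diagonal part $W$ (noting that deleting the off-diagonal entries $u_{\alpha n}$ increases $\sigma_2$ and preserves $\sigma_1$, so $W\in\Gamma^2$) and then invokes $\sigma_1(\lambda|n)>0$; either route disposes of the off-diagonal terms you were worried about. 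One point to fix: with $u<0$ in $\Omega$, $u=0$ on $\partial\Omega$ and $e_n$ the inner normal, the Hopf lemma gives $u_n(0)<0$ (the paper's $-u_n>0$), not $u_n(0)>0$ as you wrote; this slip is silently cancelled by your nonstandard choice of $\kappa$ as the eigenvalues of $-D^2\rho(0)$ (with the usual convention the second fundamental form with respect to the inner normal is $+D^2\rho(0)$, so that a ball has positive curvature), and with both signs corrected the conclusion $\sum_\alpha\kappa_\alpha>0$ stands.
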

\begin{proof}
Let us calculate in Fermi coordinates,
where the metric $g$
is expressed as $$g=\sum_{1\le i,j\le n-1}g_{ij}dx^{i}dx^{j}+d x^ndx^n.$$
 Let   
  $h_{ij}$ be the second
 fundamental form of $\partial \Omega$.
 Then on $\partial \Omega$, by differentiating the boundary condition twice,
 \begin{equation}u_{ii}=-h_{ii}u_n, \textrm{for} \ 1\le i\le n-1.\label{2.58}
 \end{equation}
 By Hopf lemma, we have $-u_n>0$.
 Set
 \[W=diag\{u_{11},\cdots,u_{nn}\}.\]
 Let us calculate
 at each point $x\in\partial\Omega.$
 We may assume at this point,
 \begin{eqnarray}D^2u=
 \left(
   \begin{array}{ccccc}
     u_{11} & 0      & \cdots    & 0          & u_{1n} \\
      0     & u_{22} & \cdots    & \cdots     & u_{2n} \\
     \cdots &        & \cdots    & 0          & \cdots \\
     0      &\cdots  & 0         &u_{n-1,n-1} & u_{n-1,n} \\
     u_{n1} &\cdots  & u_{n,n-2} &u_{n,n-1}   & u_{nn} \\
   \end{array}
 \right).
 \end{eqnarray}
 Therefore,
 \[\sigma_2(D^2u)=\sigma_2(W)-\sum_i^{n-1}u_{in}^2\le \sigma_2(W),\]
 \[\sigma_1(D^2u)=\sigma_1(W).\]
 Thus we have $W\in \Gamma^2$ and $\sigma_1(\lambda(D^2W)|n)>0$.
 It implies
 $-\sum_i^{n-1} h_{ii}u_n>0$
 and
 $\sum_i^{n-1}h_{ii}>0$.
\end{proof}
Now let us continue to prove
Theorem \ref{theorem4.1}:

Set~$R>1$.
Let
\[\Omega_R=\{y\in \mathbb{R}^n|u(Ry)\le R^2\},\]
and
\[v(y)=\frac{u(Ry)-R^2}{R^2}.\]
Then $v$ satisfies
Dirichlet problems
\begin{equation}\label{4.2}
\left\{
\begin{array}{lr}
\sigma_2(D^2v)=1,& in\ \Omega_R,\\
v=0,& on\ \partial \Omega_R.
\end{array}\right.
\end{equation}

By the quadratic growth condition, we obtain
\[c|Ry|^2-b\le u(Ry)\le R^2,\]
then
\[|y|^2\le \frac{b+1}{c}.\]

Note that
\[\sigma_2(D^2v)=1\le \sigma_2(D^2(|y|^2)).\]
Thus by comparison principle
we know that the minimum of~$v-|y|^2$~
is attained on the boundary.
Therefore,
\[v\ge -\frac{b+1}{c}.\]
Thus~$v$~is bounded by the absolute constants.

Moreover,
in view of Theorem \ref{theorem4.0}
and Lemma \ref{lemma6},
we have the gradient estimates.
Hence, $C$ in Theorem \ref{062401}
is an absolutely constant£º
\[-v\Delta v\le C.\]

Next, set
\[\Omega'_R=\{y|u(Ry)\le\frac{R^2}{2}\}.\]

Thus in~$\Omega'_R$~
\[v\leq-\frac{1}{2},\]
then
\[\Delta v\le C.\]
Note that
\[\Delta u=\Delta v.\]
Hence
\[\Delta u\le C\]
in
$\{x|u(x)\le\frac{R^2}{2}\}$
for any $R$.
Here $C$~is also an absolutely constant.
Lastly, using~Evans-Krylov theory (see \cite{GT}),
we have
\[\lim_{R\rightarrow\infty}|D^2u|_{C^\alpha(R)}\le\lim_{R\rightarrow\infty} C\frac{|D^2u|_{C^0(2R)}}{R^\alpha}
\le\lim_{R\rightarrow\infty}\frac{C}{R^\alpha}=0.\]
The proof is completed.


\bigskip

\end{document}